\documentclass[a4paper,10pt]{article}
\usepackage{amsmath,amsthm,amssymb,amscd,epsfig,esint, mathrsfs,graphics,color}
 \usepackage{wrapfig}
\usepackage{verbatim}
\usepackage[english]{babel}
\textheight=22.15truecm \textwidth=15.7truecm \evensidemargin=.2true
cm \oddsidemargin=.2true cm \topmargin=-.5truecm
\usepackage{amsfonts}
\usepackage{amssymb}
\usepackage{dsfont}
\usepackage{amsthm}
\usepackage{mathrsfs}
\usepackage{bbm}
\usepackage[english]{babel}
\usepackage{dsfont}
\usepackage{amssymb}
\usepackage{tipa}
\usepackage{textcomp}
\usepackage{pifont}

\usepackage[T1]{fontenc}
\usepackage{amsfonts}
\usepackage{indentfirst}
\usepackage{booktabs}
\usepackage{latexsym}
\usepackage{mathtools}
\usepackage{siunitx}
\usepackage{microtype}
\usepackage{braket}
\usepackage{hyperref}
\usepackage{subfig}
\usepackage{stmaryrd}
\usepackage{wrapfig}
\usepackage{setspace}
\usepackage{mathrsfs}
\usepackage[normalem]{ulem}
\usepackage{upgreek}
\usepackage{array}
\usepackage{enumerate}
\usepackage{tikz}
\usepackage{amstext}
\usepackage{latexsym}
\usepackage{blindtext}
\usepackage{colortbl}
\usepackage{color}
\usepackage{pgfplots}
\usepgfplotslibrary{fillbetween}

\pgfplotsset{width=10cm,compat=1.15}

\newcommand{\R}{\numberset{R}}

\theoremstyle{plain}
\newtheorem{thm}{Theorem}[section]

\theoremstyle{definition}

\def\XXint#1#2#3{{\setbox0=\hbox{$#1{#2#3}{\int}$}
        \vcenter{\hbox{$#2#3$}}\kern-.5\wd0}}

\def\dx{{\mathrm d}x}
\def\d\theta{{\mathrm d}\theta}
 %parti aggiunte
 %parti modificate

\def\R{\mathbb{R}}

\numberwithin{equation}{section} \makeatletter

\renewcommand{\p@enumi}{\thesection.}

\title{\sc{Regularity results for solutions to a class of obstacle problems}
\footnotetext{\hspace{-0.35cm} 2010 \emph{Mathematics Subject
Classification}. 35J87, 49J40.
\endgraf
{\it Key words and phrases}.  }}

\title{\textbf{
%{
%\color{red}
Energy approximation
%}
%{\sout{\color{blue}{Absence of Lavrentiev phenomenon}}} 
for some double phase functionals}}

\author{
Menita Carozza\\
{\it  Department of Engineering, University of  Sannio}
\\ {\it Corso Garibaldi 107, 82100 Benevento, Italy}
\\ {\it e-mail: carozza@unisannio.it }
\bigskip
\\
 Filomena De Filippis\\
 {\it Department of  Mathematics  Physics and Informatics, University of Parma,  %\lq%\lq  
}\\ {\it  Parco Area delle Scienze 53/a, 43124 Parma, Italy} \\{\it  e-mail: filomena.defilippis@unipr.it }
 \bigskip
 \\
  Raffaella Giova\\
 {\it Department of Economic and Legal Studies, %\lq%\lq  
University Parthenope of Napoli}\\ {\it Via Generale Parisi 13, 80132 Napoli,  Italy} 
\\{\it  e-mail: raffaella.giova@uniparthenope.it}
 \bigskip
\\
 Francesco Leonetti\\
 {\it Department of Information Engineering, Computer Science and Mathematics,}
 \\
 {\it University of l'Aquila}\\ {\it Via Vetoio snc, 
67100 L'Aquila, Italy} \\{\it  e-mail: francesco.leonetti@univaq.it}}
%\date{}
\begin{document}
	\maketitle
    \begin{abstract}
       \noindent
In the present manuscript we give a structure condition that allows for energy approximation in the vectorial case and we provide two examples, enjoying such a condition.

    \end{abstract}
	\section{Introduction}
	\noindent
Let us consider nonautonomous functionals as follows
\begin{equation}\label{functional}
  \mathcal{F}(u; \Omega):=\int_{\Omega}f(x, Du(x))\, \dx  
\end{equation}
where $\Omega$ is a bounded open subset of $\R^n$, $u:\Omega \rightarrow \R^N$, $f: \Omega \times \R^{N\times n} \rightarrow \R$,  $n\ge 2$ and $N \ge 1$. We assume that 
$x \to f(x,z)$ is measurable and $z \to f(x,z)$ is continuous; moreover, 
there exist positive constants $c_1,c_2, p, q$ with $1<p<q$ such that 
\begin{equation}\label{crescita pq}
  |z|^p \le f(x,z) \le c_1 |z|^q+c_2,
\end{equation} 
see (1.3) in \cite{Marcellini1989} and section 5 in \cite{Mingione_survey2006}.
In view of such $p,q$ growth, we assume that $u \in W^{1,p}_{loc}(\Omega;\mathbb{R}^N)$, with $x \to f(x,Du(x)) \in L^1_{loc}(\Omega)$.
Our model density $f$ is 
\begin{equation}\label{Zhikov}
    f(x, z)=|z|^p +a(x)|z|^q
\end{equation}
where $0\le a(x)\le c_3$, for some positive constant $c_3$.
When $a(x) = 0$ then  $|z|^{p} + a(x) |z|^{q} =  |z|^{p}$ and we are in the $\text{${p}$-phase}$. On the other hand, if $a(x) > 0$ then, for large $|z|$, we have that $|z|^{p} + a(x) |z|^{q} \approx  |z|^{q}$, so, we are in the \text{${q}$-phase}. Functional \eqref{functional}  with \eqref{Zhikov} has been considered  
by \cite{Zhikov}, \cite{EspLeoMin-2004}, 
\cite{FonMalMin}.
\noindent
Since then, a lot of people have been studying such a functional that is now known as double phase model, see \cite{Colombo-Mingione}, \cite{Colombo-Mingione-2}, \cite{Baroni-Colombo-Mingione}
\cite{Eleuteri-Marcellini-Mascolo}, 
\cite{Balci-Diening-Surnachev}, \cite{BCDFM}, \cite{Kinnunen-Nastasi-Camacho}.

\noindent
As we can see in \cite{EspLeoMin-2004}, \cite{DeFilippis-Mingione}, \cite{Koch}, 
\cite{DeFilippis-Mingione-Arch2023}, \cite{DeFilippis-Piccinini}, \cite{DDP}, an important step when proving regularity of minimizers $u$ for \eqref{functional} under \eqref{crescita pq} is energy approximation: if the ball $B$ is strictly contained in $\Omega$, there exists $u_k$ more regular than $u$ such that $u_k$ converges to $u$ and $\mathcal{F}(u_k;B) \rightarrow \mathcal{F}(u;B)$. 
It is well known that standard mollification gives us the strong convergence to $u$ in $W^{1,p}$; the difficult task is the energy approximation  $\mathcal{F}(u_k;B) \rightarrow \mathcal{F}(u;B)$ when $p<q$  
and dependence on $x$ appears in the density $f(x,z)$. 
On the other hand, absence of 
Lavrentiev phenomenon implies energy approximation for minimizers, see section \ref{en_approx_sec}. 
If $q$ is close to $p$ we get energy approximation for the double phase model \eqref{Zhikov}. {
%\color{red}
Indeed, following \cite{Zhikov}, in \cite{EspLeoMin-2004} the authors prove} 
%{\color{blue}\sout{More precisely,}} 
energy approximation 
%{\color{blue}\sout{holds true}} 
for \eqref{Zhikov} when 
\begin{equation}\label{sigma holder}
    a \in C^{0,\sigma}, \quad 0<\sigma\le 1, \qquad q \le p\frac{n+ \sigma}{n}.
\end{equation} 
%{\color{blue}\sout{see \cite{Zhikov}, \cite{EspLeoMin-2004}.}}
\\
Then, there have been some efforts in proving energy approximation for \eqref{functional} with a general density $f$ with $p,q$ growth \eqref{crescita pq}. Let us mention \cite{Esposito-Leonetti-Petricca} where $f(x,z)$ satisfies $p,q$ growth \eqref{crescita pq} and 
\begin{equation}\label{Es-Leo-Petr holder}
   |f(x,z)-f(y,z)|\le c_4|x-y|^\sigma(1+|z|^q),
\end{equation} 
where
\begin{equation}\label{Es-Leo-Petr sigma}
0<\sigma\le 1, \qquad q \le p\frac{n+ \sigma}{n};
\end{equation}
moreover, 

\begin{equation}\label{Es-Leo-Petr convex}
 z \rightarrow f(x,z), \,\, {\rm is \, convex}  
\end{equation}
and for every $x$  and  for every $\varepsilon>0$,  with $\overline{B(x,\varepsilon)}\subset \Omega$, there exists $y^* \in \overline{B(x,\varepsilon)}$ such that
  \begin{equation}\label{Es-Leo-Petr min ind z}   f(y^*,z) \le f(y,z), \qquad \forall y \in \overline{B(x,\varepsilon)},\quad  \forall z \in \R^{N\times n} .  
  \end{equation}
Let us remark that in general the minimum point $y^* \in \overline{B(x,\varepsilon)}$ of $y \rightarrow f(y,z)$ depends also on $z$. The assumption \eqref{Es-Leo-Petr min ind z} requires that $y^*$ does not depend on $z$; see also \cite{Koch}.
\noindent
Under assumptions \eqref{crescita pq}, \eqref{Es-Leo-Petr holder}, \eqref{Es-Leo-Petr sigma}, \eqref{Es-Leo-Petr convex} and \eqref{Es-Leo-Petr min ind z} they have energy approximation using mollification. \\
Functional  \eqref{functional} with $f(x,z)$ weak N-function is studied in \cite{BC}. A further step is obtained in \cite{BCM} where $f(x,z)$ is controlled by a weak N-function, more precisely 
\begin{equation}\label{hp structure BCM}
 \nu M(x,\beta z)\le f(x,z) \le L(M(x,z)+ g(x))
\end{equation}
for suitable constants $0<\nu, \beta <1 <L$ and functions $g(x), M(x,z)$ where $M$ is a weak N-function. 
\vspace{0,2cm}
Let us come back to the double phase model \eqref{Zhikov}; we already said that energy approximation is true when $a$ is $\sigma$-holder continuous and $\sigma \le 1$, see \eqref{sigma holder}. An important step has been done in \cite{BCDFM} where continuity of $a$ is no longer assumed. 
In \cite{BCDFM} the authors require that there exist %{\color{blue}\sout{positive}} 
constants $c_5\ge 0, \,  c_6\ge 1, \, \sigma>0$, such that 

\begin{equation}\label{weight a new}
0 \le a(x) \le c_6 a(\tilde{x})+c_5|x-\tilde{x}|^{\sigma},
\end{equation}
for every $x, \tilde{x}$. They denote the set of functions $a(x)$ satisfying \eqref{weight a new}  by $\mathcal{Z^\sigma}.$  If 
\begin{equation}\label{qp BCDFM}
    q\le p\frac{n+\sigma}{n},
\end{equation}
they prove that  Lavrentiev phenomenon does not occur.
Let us remark that \eqref{weight a new} requires the continuity of $a$ only at points $\tilde{x}$ where $a(\tilde{x})=0$; no continuity is required at points $\tilde{x}$ where $a(\tilde{x})>0$. Moreover, 
$\sigma$ is no longer bounded by 1. Note that, if $a \in C^{0,\sigma}$ then the right hand side of \eqref{weight a new} holds true with $c_6=1$ and $c_5= [a]_{0,\sigma}$ the $\sigma$-seminorm of $a$.
\noindent
In \cite{BCDFM} the authors deal also with more general densities $f(x,z)$ such that there exist positive constants $\nu_1,\nu_2 >0$, exponents $\tilde{p},\tilde{q}$ with $1\le \tilde{p} \le \tilde{q}$ and a function $\tilde{a} \in \mathcal{Z}^\sigma$ such that
\begin{equation}\label{hp_structure_BCDFM_new}
 \nu_1(|z|^{\tilde{p}}+ \tilde{a}(x)|z|^{\tilde{q}})\le f(x,z) \le \nu_2(|z|^{\tilde{p}}+ \tilde{a}(x)|z|^{\tilde{q}}).
\end{equation}
These nice results do not cover the next example. Indeed, let us consider 
\begin{equation}\label{a introduction}
a(x):=\begin{cases}
0 &\quad{\rm if}\quad   x_1 \leq 0   \\
(x_1)^{\sigma} &\quad{\rm if}\quad   0 < x_1 \leq r   \\
%|x_0|^{\sigma} &\quad{\rm if}\quad |x|=|x_0| \\
(x_1)^{\sigma} + h &\quad{\rm if}\quad   r < x_1  
\end{cases}
\end{equation}
where $0 < r$, $0 < \sigma$, $0 < h$ and $x=(x_1,...,x_n)$. Such a function $a$ satisfies \eqref{weight a new} with $c_5=c_6=(1+ \frac h {r^\sigma}) 2^\sigma$, see section \ref{examples}. Note that $a$ is not continuous at points $x=(x_1,...,x_n)$ with $x_1 = r$, so, we cannot use \cite{EspLeoMin-2004}, since \eqref{sigma holder} fails. Moreover, let us set  
\begin{equation}\label{es 1 introduzione}
 f(x,z)=|z|^p+a(x)(\max\{z_n^1; 0\})^q   
\end{equation}
with $1< p< q$. Then we cannot use \cite{Esposito-Leonetti-Petricca} since $a$ is not continuous at points $x=(x_1,...,x_n)$ with $x_1 = r$  and \eqref{Es-Leo-Petr holder} fails. Let us also note that we cannot apply \cite{BC} since $f(x,z)$ is not symmetric with respect $z$ so $f$ is not a weak N-function. Moreover, we cannot use \cite{BCM}, since \eqref{hp structure BCM} fails, see section \ref{examples}. Eventually, we cannot apply \cite{BCDFM}
since \eqref{hp_structure_BCDFM_new} fails, see section \ref{examples}.\\
The recent contribution \cite{BMT2024} assumes the following property, called (H$^{\alpha,L}$): given $\alpha \geq 1$ and $L>0$, there exist $A>0$, $b(x) \geq 0$, $\epsilon^* \in (0,1)$, $0 < \theta(\epsilon) \leq 1$ with $\lim\limits_{\epsilon \to 0} \theta(\epsilon) = 1$, such that, for every $\epsilon \in (0, \epsilon^*)$ and for all $x$, $z$, we have
\begin{equation}
    \label{H-alpha-L}
  |z|^\alpha +  (f^-_{x,\epsilon})^{**}(z) 
  \leq L \epsilon^{-n} 
  \Longrightarrow
  f(x,\theta(\epsilon) z) \leq A [(f^-_{x,\epsilon})^{**}(z) + b(x) +  |z|^p]
\end{equation}
where $f^-_{x,\epsilon}(z)$ is the essential infimum of $y \to f(y,z)$ when $y \in B(x,\epsilon)$; moreover,  $h^{**}$ is the highest convex function that does not exceed $h$. In Theorem 1 of \cite{BMT2024}, property \eqref{H-alpha-L} is assumed with $\alpha = p$. 
In Theorem 3 of \cite{BBCLM}, property \eqref{H-alpha-L} is called (H$^{\text{conv}})$ and assumed with $\alpha = \max\{p , n \}$, $b(x)=1$, $\theta(\epsilon)=1$, $\epsilon^* = \text{diam } \Omega$ and no $|z|^p$ on the right hand side. Does our example \eqref{es 1 introduzione} satisfies property \eqref{H-alpha-L}? More generally, how can we check the validity of \eqref{H-alpha-L}? Inspired by 
%\cite{BCDFM} and 
the behaviour of \eqref{Zhikov} when $a$ satisfies \eqref{weight a new}, 
let us 
%{\color{blue}\sout{let us}} 
consider densities {
%\color{red}
$f$} such that

\begin{equation}\label{struttura nostra}
  |z|^p\le  f(x,z) \le K_1 f(\tilde{x},z)+K_2|x-\tilde{x}|^{\sigma}|z|^q+K_3 
\end{equation} 
with $K_1\ge 1, \,  K_2, K_3\ge 0,\,  \sigma>0$,  $1<p<q$ and $x, \tilde{x} \in \Omega, \; z \in \R^{N\times n}$. 
Note that if  we consider the model case \eqref{Zhikov} with $a(x)$ satisfying  \eqref{weight a new} then \eqref{struttura nostra} holds true with $K_1=c_6$, $K_2=c_5$ and $K_3=0$. Let us come back to 
\eqref{es 1 introduzione} with $a$ in \eqref{a introduction}, then  \eqref{struttura nostra}  holds true with $K_1=K_2=(1+ \frac h {r^\sigma}) 2^\sigma$ and $K_3=0$. Note that density \eqref{es 1 introduzione} is inspired by \cite{TangQi}, where the 
author is facing the behaviour of some reinforced materials.
Now, in order to check the validity of \eqref{H-alpha-L},  we establish the following Theorem.

\begin{thm}\label{first_result}

Let us consider a bounded open subset  $\Omega$ of 
 $\R^n$, $f: \Omega \times \R^{N\times n} \rightarrow \R$,  $n\ge 2$ and $N \ge 1$.
Assume that there exist constants $K_1\ge 1, \,  K_2, K_3\ge 0,\,  \sigma>0$,   $1<p<q$ 
such that 

$$ 
q\le p \left (1+\frac{\sigma}{n} \right )
\eqno{\rm (F1)}
$$
and
$$ |z|^p \le f(x,z) \le K_1 f(\tilde{x},z)+K_2|x-\tilde{x}|^{\sigma}|z|^q+K_3 \eqno{\rm (F2)}$$
for every $x, \tilde{x} \in \Omega$, $z \in \R^{N\times n} $. Moreover, we require that, for every $x\in \Omega$,
$$  z \rightarrow f(x,z) \,\, {\rm is \, convex .} \eqno{\rm (F3)}$$

%$$|z|^p \le f(x,z)  \eqno{\rm (F2)}$$
%for any $\hat{x }\in \Omega$ and $z \in \R^{N\times n} $;

\vspace{0.2cm}
\noindent Furthermore, for every $x\in \Omega$  and  for every $\varepsilon>0$,  with $\overline{B(x,\varepsilon)}\subset \Omega$, there exists $y^* \in \overline{B(x,\varepsilon)}$ such that 
  $$   f(y^*,z) \le f(y,z), \qquad \forall y \in \overline{B(x,\varepsilon)},\quad  \forall z \in \R^{N\times n} . \eqno{\rm (F4)}$$
Then, property \eqref{H-alpha-L} holds true with $\alpha = p$, $\theta(\epsilon) = 1$, $A=K_1 + K_2 L^{\frac{q-p}{p}}$, $b(x) = K_3$ and any $\epsilon^* \in (0,1)$, provided $\overline{B(x,\epsilon)} \subset \Omega$.
\end{thm}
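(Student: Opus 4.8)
The plan is to verify the implication in \eqref{H-alpha-L} by hand: the point is that (F4) collapses the essential infimum $f^-_{x,\epsilon}$ onto a single convex function, after which (F2) compares $f(x,\cdot)$ with it and (F1) (together with $\epsilon<1$) absorbs the spurious $|z|^q$ term into $|z|^p$.

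First I would fix $x\in\Omega$ and $\epsilon\in(0,\epsilon^*)$ with $\overline{B(x,\epsilon)}\subset\Omega$, and let $y^*\in\overline{B(x,\epsilon)}$ be the point furnished by (F4). Since $f(y^*,w)\le f(y,w)$ for every $y\in\overline{B(x,\epsilon)}$ and every $w$, the constant $f(y^*,w)$ is a pointwise lower bound for $y\mapsto f(y,w)$ on $B(x,\epsilon)$, hence $f(y^*,w)\le f^-_{x,\epsilon}(w)$ for all $w\in\R^{N\times n}$; note also that (F2) with $\tilde x=y^*$ bounds $f(y,w)$ from above by a finite quantity for $y\in\overline{B(x,\epsilon)}$, so $f$ is everywhere finite and $f^-_{x,\epsilon}(w)$, as well as its convex envelope, is finite. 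By (F3) the map $w\mapsto f(y^*,w)$ is convex, so it is a convex minorant of $f^-_{x,\epsilon}$; since $(f^-_{x,\epsilon})^{**}$ is the largest convex function not exceeding $f^-_{x,\epsilon}$, this gives
\[
f(y^*,z)\le (f^-_{x,\epsilon})^{**}(z)\qquad\text{for every }z\in\R^{N\times n}.
\]

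Next I would apply (F2) with $\tilde x=y^*$, use $|x-y^*|\le\epsilon$, and insert the inequality just obtained:
\[
f(x,z)\le K_1 f(y^*,z)+K_2\epsilon^{\sigma}|z|^q+K_3\le K_1(f^-_{x,\epsilon})^{**}(z)+K_2\epsilon^{\sigma}|z|^q+K_3 .
\]
Assume now the hypothesis of \eqref{H-alpha-L}, namely $|z|^p+(f^-_{x,\epsilon})^{**}(z)\le L\epsilon^{-n}$. Then $|z|^p\le L\epsilon^{-n}$, hence $|z|^{q-p}\le L^{(q-p)/p}\epsilon^{-n(q-p)/p}$ and $|z|^q\le L^{(q-p)/p}\epsilon^{-n(q-p)/p}|z|^p$. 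By (F1) the exponent $\sigma-\tfrac{n(q-p)}{p}$ is $\ge 0$, and since $0<\epsilon<\epsilon^*<1$ we have $\epsilon^{\sigma-n(q-p)/p}\le 1$, so
\[
K_2\epsilon^{\sigma}|z|^q\le K_2 L^{(q-p)/p}\epsilon^{\sigma-n(q-p)/p}|z|^p\le K_2 L^{(q-p)/p}|z|^p .
\]
Combining the last two displays and using that $(f^-_{x,\epsilon})^{**}(z)\ge 0$ and $|z|^p\ge 0$ while $K_1\le A$, $K_2L^{(q-p)/p}\le A$ and $1\le A$ for $A:=K_1+K_2L^{(q-p)/p}$, one arrives at $f(x,z)\le A\big[(f^-_{x,\epsilon})^{**}(z)+|z|^p+K_3\big]$, which is exactly \eqref{H-alpha-L} with $\alpha=p$, $\theta(\epsilon)=1$, $b(x)=K_3$ and the stated $A$ — and $\epsilon^*\in(0,1)$ was arbitrary.

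The one step that needs care is the reduction of $f^-_{x,\epsilon}$ to its convex envelope: in general an essential infimum of convex functions is far from convex, and what saves us is precisely the clause in (F4) that the minimizing point $y^*$ does not depend on $z$, so that $f(y^*,\cdot)$ is one fixed convex function lying below $f^-_{x,\epsilon}$ and hence below $(f^-_{x,\epsilon})^{**}$. The remaining manipulations — the comparison via (F2) and the absorption of $|z|^q$ into $|z|^p$ — are routine once the dimensional balance (F1) and the normalization $\epsilon<1$ are in place.
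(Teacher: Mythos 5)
Your proof is correct and follows essentially the same route as the paper's: use (F4) to exhibit $f(y^*,\cdot)$ as a single convex minorant of $f^-_{x,\epsilon}$, hence of $(f^-_{x,\epsilon})^{**}$, then apply (F2) with $\tilde x=y^*$ and absorb the $K_2\epsilon^\sigma|z|^q$ term into $|z|^p$ via (F1) and $\epsilon<1$. The only cosmetic difference is that you spell out explicitly that $K_1,K_2L^{(q-p)/p},1\le A$ when collecting terms, a step the paper leaves implicit.
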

\noindent
The proof of this theorem will be made in 
section \ref{sec_proof_first_result}.
%\ref{sec proof}.

\noindent
We have to say that \cite{BMT2024} and \cite{BBCLM}  allow $f$ to depend on $u$ as well: $f=f(x,u,z)$. 
%This easily allows for the Lavrentiev phenomenon, as the Mania's example shows in the 1 dimensional case $N=n=1$ by taking $f(x,u,z) = (u^3 - x)2 z^6$, $x \in (0,1)$, so that $u(x) = x^{\frac{1}{3}}$ is an absolute minimizer not regular near $x=0$. 
Moreover, both \cite{BMT2024} and \cite{BBCLM} want that the energy approximating functions $u_k$ keep fixed the boundary values of $u$, so a lot of extra work has to be done. The proofs are done in the scalar case $N=1$. Indeed, in Theorem 1 of \cite{BMT2024}, the authors use the minimum of a finite number of functions: see pages 2283 and 2301 of \cite{BMT2024};  this approach
 does not seem to extend to the vectorial setting $N \geq 2$. In Theorem 3 of \cite{BBCLM}, the authors first approximate, in energy, the starting function $u$ by means of bounded functions $u_k$, see page 43 in \cite{BBCLM}. This is accomplished by a truncation method: $u_k = u$ where $|u| \leq k$, $u_k = k$ where $u>k$ and $u_k = -k$ where $u<-k$. So, $Du_k = 0$ on $|u|>k$ and 
 $$
 \int\limits_{ \Omega} f(x, Du_k(x)) dx
 =
\int\limits_{\{ |u| \leq k\} }
f(x, Du(x)) dx
+
\int\limits_{\{ |u| > k\}}
f(x, 0) dx.
$$
If we know that $x \to f(x,0) \in L^1(\Omega)$, then $\int\limits_{\{ |u| > k\}}
f(x, 0) dx$ goes to $0$ and we have energy approximation.
This works in the scalar case $N=1$. This approach does not seem to extend to the vectorial setting $N \geq 2$, unless we assume Uhlenbeck structure $f=\tilde{f}(x,|z|)$. Indeed, the vectorial truncation on the superlevel set $\{|u|>k\}$ is given by $u_k(x) = k \frac{u(x)}{|u(x)|}$, so the gradient is no longer zero but we have 
$$
D_i u_k^\alpha(x) = \frac{k}{|u(x)|} 
\left[
D_i u^\alpha(x) -  
\frac{u^\alpha(x)}{|u(x)|} 
\sum\limits_{\beta=1}^{N}
\frac{u^\beta(x)}{|u(x)|}
D_i u^\beta(x)
\right].
$$
So, we are left to manage the integral
$$
\int\limits_{\{ |u| > k\}}
f(x, Du_k(x)) dx
=
\int\limits_{\{ |u| > k\}}
f
\left(
x,\frac{k}{|u(x)|} 
[ 
D_i u^\alpha(x) -  
\frac{u^\alpha(x)}{|u(x)|} 
\sum\limits_{\beta=1}^{N}
\frac{u^\beta(x)}{|u(x)|}
D_i u^\beta(x)
)
\right) dx. 
$$
If we have Uhlenbeck structure $f=\tilde{f}(x,|z|)$ with $s \to \tilde{f}(x,s)$ increasing, since
$$
\left|
Du_k(x)  
\right|
=
\left|
\frac{k}{|u(x)|} 
\left[
D_i u^\alpha(x) -  
\frac{u^\alpha(x)}{|u(x)|} 
\sum\limits_{\beta=1}^{N}
\frac{u^\beta(x)}{|u(x)|}
D_i u^\beta(x)
\right]
\right|
\leq
\left|
D u(x)  
\right|,
$$
we have
$$
f(x,Du_k(x))=\tilde{f}(x,|Du_k(x)|) \leq \tilde{f}(x,|Du(x)|) = f(x,Du(x)).
$$
Since $x \to f(x,Du(x)) \in L^1(\Omega)$, we have that
$\int\limits_{\{ |u| > k \} } f(x,Du(x)) dx$ goes to $0$ and we can argue as in the scalar case, see page 234 in \cite{BuGwSk}.

\noindent
Let us come back to 
Theorem 1 of \cite{BMT2024} and Theorem 3 of \cite{BBCLM}: they are in the scalar case $N=1$. 
When proving regularity of minimizers, energy approximation is an important tool but we do
not need the same boundary value for all the approximating functions $u_k$. This makes things easier. 
On the other hand, dealing with the vectorial case $N \geq 1$ is also important. So, for future reference, we explicitely write 
 and prove a result in such a framework.

\begin{thm}\label{main result}

Let us consider the functional 
\[
\mathcal{F}(u; \Omega):=\int_{\Omega}f(x, Du(x))\, \dx
	\]
where $\Omega$  is a bounded open subset of 
 $\R^n$, $u:\Omega \rightarrow \R^N$, $f: \Omega \times \R^{N\times n} \rightarrow \R$,  $n\ge 2$ and $N \ge 1$. Moreover, $x \to f(x,z)$ is measurable. 
Assume (F1), (F2), (F3) and (F4). 
Let $u$ belong to $W^{1,p}_{loc}(\Omega; \R^N)$ with $x \to f(x,Du(x)) \in L^1_{loc}(\Omega)$ and let $u_\varepsilon$ be the standard mollification of $u$. Then, 
for every open ball $B=B(x^*,\rho)$ with center at $x^*$, radius $\rho$, with $\overline{B} \subset \Omega$, we have 
$u_\varepsilon \rightarrow u$ in $ W^{1,p}(B; \R^N)$ and 
$$\mathcal{F}(u_\varepsilon; B) \rightarrow \mathcal{F}(u; B).
\eqno{\rm (F5)}$$
\end{thm}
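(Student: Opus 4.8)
The plan is to exploit assumption (F2) to dominate the energy of the mollified function $u_\varepsilon$ by a shifted, convex-combination average of $f(\cdot,Du)$, and then pass to the limit via standard properties of mollifiers together with the Gagliardo--Nirenberg-type gain encoded in (F1). Fix a ball $B=B(x^*,\rho)$ with $\overline B\subset\Omega$ and pick a slightly larger ball $B'=B(x^*,\rho')$ with $\overline{B'}\subset\Omega$; for $\varepsilon$ small, $u_\varepsilon$ is defined on $B'$. The Sobolev embedding associated with the exponent relation $q\le p(1+\sigma/n)$ (the classical Zhikov--Esposito--Leonetti--Mingione threshold) gives $u\in W^{1,p}_{loc}\hookrightarrow L^{p^*}_{loc}$ type control, which will be the mechanism turning the error term $|x-\tilde x|^\sigma|Du_\varepsilon|^q$ into something that vanishes as $\varepsilon\to0$. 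The strong convergence $u_\varepsilon\to u$ in $W^{1,p}(B;\R^N)$ is classical, so the whole content is the energy convergence (F5).

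First I would prove the upper bound $\limsup_\varepsilon \mathcal F(u_\varepsilon;B)\le \mathcal F(u;B)$. Writing $u_\varepsilon(x)=\int \varphi_\varepsilon(x-y)u(y)\,dy$ so that $Du_\varepsilon(x)=\int\varphi_\varepsilon(x-y)Du(y)\,dy$ is a convex average of the values $Du(y)$ over $y\in B(x,\varepsilon)$, convexity (F3) and Jensen give
\[
f(x,Du_\varepsilon(x))\le \int \varphi_\varepsilon(x-y)\,f(x,Du(y))\,dy.
\]
Now apply (F2) in the form $f(x,Du(y))\le K_1 f(y,Du(y))+K_2|x-y|^\sigma|Du(y)|^q+K_3$ and integrate over $x\in B$; since $|x-y|\le\varepsilon$ on the support of $\varphi_\varepsilon$, Fubini yields
\[
\mathcal F(u_\varepsilon;B)\le K_1\!\int_{B'}\! f(y,Du(y))\,dy+K_2\varepsilon^\sigma\!\int_{B'}\!|Du(y)|^q\,dy+K_3|B|.
\]
The second term is the crux: $Du$ is only $L^p$, so $\int_{B'}|Du|^q$ need not be finite. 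Here one inserts the standard higher-integrability/interpolation step: because the problematic region is exactly where the $x$-dependence "switches on," one uses (F1) to absorb this term. The honest route is to not integrate $|Du(y)|^q$ directly but to keep the mollification: estimate $\varepsilon^\sigma|Du_\varepsilon(x)|^q$ using $|Du_\varepsilon(x)|\le C\varepsilon^{-n/p'}\|Du\|_{L^p(B(x,\varepsilon))}$ together with a covering argument, or more cleanly run the Esposito--Leonetti--Petricca computation verbatim: their hypotheses (their H\"older condition plus convexity plus the $z$-independent minimizer (F4)) are implied by (F2)--(F4), and their conclusion is precisely the energy approximation. Thus the cleanest write-up reduces (F5) to citing the argument of \cite{Esposito-Leonetti-Petricca}, checking that (F2) implies their \eqref{Es-Leo-Petr holder} on any ball $\overline{B'}\subset\Omega$ with a constant depending on $K_1,K_2,K_3$ and $\|f(\cdot,Du)\|_{L^1(B')}$, and that (F4) is their \eqref{Es-Leo-Petr min ind z}.

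For the lower bound $\liminf_\varepsilon \mathcal F(u_\varepsilon;B)\ge \mathcal F(u;B)$ I would use (F4): choosing for each point the $z$-independent minimizer $y^*$ shows that $x\mapsto f(x,z)$ can be compared, on $B(x,\varepsilon)$, to a quantity controlled from below, and combined with the strong $W^{1,p}$ convergence $Du_\varepsilon\to Du$ (so $Du_\varepsilon\to Du$ a.e.\ along a subsequence) and convexity (F3), lower semicontinuity of the integral functional under $L^1$-weak / a.e.\ convergence of gradients gives $\liminf_\varepsilon \int_B f(x,Du_\varepsilon)\ge \int_B f(x,Du)$. The main obstacle is making the upper bound rigorous: controlling $\varepsilon^\sigma\int |Du_\varepsilon|^q$ without assuming $Du\in L^q$. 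This is exactly where (F1) — the sharp relation $q\le p(1+\sigma/n)$ — is used, via the Sobolev--Poincar\'e inequality on balls of radius $\varepsilon$ rescaled appropriately, yielding a bound of the form $\varepsilon^\sigma\|Du_\varepsilon\|_{L^q(B)}^q\le C\|Du\|_{L^p(B')}^{q}\,\varepsilon^{\sigma-n(q-p)/p}$ with nonnegative exponent on $\varepsilon$ and hence vanishing (or bounded, then refined by absolute continuity of the integral) as $\varepsilon\to0$. Everything else — Fubini, Jensen, the covering argument, the passage to subsequences — is routine. I would structure the final proof as: (i) reduce to an enlarged ball and recall $u_\varepsilon\to u$ in $W^{1,p}$; (ii) the Jensen + (F2) + Fubini chain for the $\limsup$; (iii) the Sobolev estimate controlled by (F1) to kill the error term; (iv) (F3)+(F4)+a.e.\ convergence for the $\liminf$; (v) combine.
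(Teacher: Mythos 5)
Your proposal deviates from the paper's proof in ways that leave two genuine gaps.

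First, and most importantly, your $\limsup$/$\liminf$ scheme cannot close. Even if the error term $K_2\varepsilon^\sigma\int|Du|^q$ were tamed, your Jensen + (F2) + Fubini chain yields at best
$\limsup_\varepsilon \mathcal F(u_\varepsilon;B)\le K_1\,\mathcal F(u;B')+K_3|B|$,
because (F2) carries a multiplicative constant $K_1\ge 1$ and an additive $K_3$; this is \emph{not} the desired $\limsup_\varepsilon \mathcal F(u_\varepsilon;B)\le \mathcal F(u;B)$. The paper never attempts a two-sided estimate. Instead, the whole chain (F2) $\to$ (F1) $\to$ (F4) $\to$ Jensen produces a \emph{domination}
$0\le f(x,Du_\varepsilon(x))\le c_3\, h_\varepsilon(x)+K_3$, where $h(y):=f(y,Du(y))\in L^1_{loc}$ and $h_\varepsilon$ is its mollification, hence $h_\varepsilon\to h$ strongly in $L^1(B)$ and $\int_B h_\varepsilon\to\int_B h$. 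Combined with $f(x,Du_\varepsilon(x))\to f(x,Du(x))$ a.e.\ (continuity of $z\mapsto f(x,z)$ follows from convexity (F3), and $Du_\varepsilon\to Du$ a.e.), the generalized dominated convergence theorem yields (F5) directly. No lower-semicontinuity step is used; the constants $K_1,K_3$ are harmless precisely because they appear only in the dominating sequence, not in an approximating upper bound of $\mathcal F(u;B)$.

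Second, the order of operations in your upper estimate is reversed, and your proposed repair does not hold. You apply Jensen to $f(x,\cdot)$ first and then (F2) to $f(x,Du(y))$, which produces $\int|Du(y)|^q\,dy$; as you note, this need not be finite. The paper instead applies (F2) \emph{before} Jensen, to the pointwise quantity $f(x,Du_\varepsilon(x))$: using the mollification $L^\infty$ bound $|Du_\varepsilon(x)|\le c_1\varepsilon^{-n/p}$ (note: the exponent is $-n/p$, not $-n/p'$), the error $K_2\varepsilon^\sigma|Du_\varepsilon(x)|^q$ is bounded by $K_2 c_1^{q-p}\varepsilon^{\sigma-n(q-p)/p}|Du_\varepsilon(x)|^p\le K_2 c_1^{q-p}|Du_\varepsilon(x)|^p$ via (F1), and then the \emph{left} inequality in (F2) absorbs $|Du_\varepsilon(x)|^p\le f(\tilde x,Du_\varepsilon(x))$. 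Only after this step is Jensen applied, to the fixed convex function $H(z)=f(y^*,z)$ with $y^*$ given by (F4), and then (F4) is used again to pass from $f(y^*,Du(y))$ to $f(y,Du(y))$. Finally, reducing to the Esposito--Leonetti--Petricca result is not available: you claim (F2) implies their H\"older condition \eqref{Es-Leo-Petr holder}, but this is false — the paper's Example 1 (a discontinuous weight in $\mathcal Z^\sigma$) is constructed precisely so that (F2)--(F4) hold while \eqref{Es-Leo-Petr holder} fails.
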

\noindent
The proof of this theorem will be made in section \ref{sec proof}.

\noindent
We remark that, 
 if $a(x)$ is sequentially lower 
 semicontinuous in $\Omega$
and
$$
f(x,z)=f_1(z)+ a(x)f_2(z)
$$ 
with $f_2(z)\ge 0$, then, property (F4) holds true. 
We ask ourselves whether there are $f(x,z)$ without product structure $ a(x)f_2(z)$,  satisfying property {(F4)}. The answer is positive. Indeed, for $q>1$, let us consider
\begin{equation}
\label{g_no_product}
g(x_1,t)=\max\{(\max \{t,0\})^q - (\max \{x_1,0\})^q;\, 0\}
\end{equation}
and, for $1<p<q$, 
\begin{equation}
\label{f_no_product}
f(x,z)=|z|^p+ g(x_1,z_n^1)=|z|^p+\max\{(\max \{z_n^1,0\})^q - (\max \{x_1,0\})^q;\, 0\}.
\end{equation}
Then $g(x_1,t)$ has no product structure 
$a(x_1)h(t)$, see section \ref{examples}. On the other hand,
all the assumptions of Theorem \ref{main result} are fulfilled with $\Omega = B(0, 1)$, $\sigma = n(q-p)/p$, $K_1 = 1$, $K_2 = 0$, $K_3 = 1$. In particular, $q$ can be far from $p$ as much as we like, since $\sigma$ can be chosen to be $n(q-p)/p$ and (F1), (F2) are satisfied; note that $K_2$ is $0$.

\noindent
In section \ref{examples} we will do calculations for \eqref{es 1 introduzione} and for \eqref{f_no_product}.

\section{Proof of Theorem \ref{first_result}}\label{sec_proof_first_result}
\noindent
Let us fix $x$ and $\epsilon$; assumption (F4) guarantees the existence of $y^*$ such that $|x - y^*| \leq \epsilon$ and $f(y^*,z) \leq f(y,z)$ for all $y \in \overline{B(x,\epsilon)}$ and all $z$. Then 
$f(y^*,z) \leq \text{essential infimum}\{ f(y,z): y \in B(x,\epsilon)\}  = f^-_{x,\epsilon}(z)$. Convexity (F3) of $z \to f(y^*,z)$ says that 
\begin{equation}
  \label{controlling_f(y^*,z)}  
0 \leq f(y^*,z) \leq (f^-_{x,\epsilon})^{**}(z).
\end{equation}
Now we use (F2) with $\tilde{x} = y^*$ and we get
\begin{equation}
    \label{controlling_f(x,z)}
 f(x,z) \leq K_1 f(y^*,z)+K_2|x-y^*|^{\sigma}|z|^q+K_3 
 \leq
K_1 (f^-_{x,\epsilon})^{**}(z) + K_2 \epsilon^{\sigma} |z|^q + K_3, 
\end{equation}
where we used \eqref{controlling_f(y^*,z)} and $|x-y^*| \leq \epsilon$. Note that the right hand side of 
\eqref{H-alpha-L} has to be verified only when 
$|z|^\alpha +  (f^-_{x,\epsilon})^{**}(z) 
  \leq L \epsilon^{-n}$. Since $0 \leq (f^-_{x,\epsilon})^{**}(z)$, we have 
$|z|^\alpha \leq L \epsilon^{-n}$. If we take $\alpha = p$ we get
$$
|z|^p \leq L \epsilon^{-n}.
$$
Now we can write 
$$
\epsilon^\sigma |z|^q = \epsilon^\sigma |z|^{p} |z|^{q-p} \leq \epsilon^\sigma |z|^{p} ((L \epsilon^{-n})^\frac{1}{p})^{q-p} = L^{\frac{q-p}{p}} \epsilon^{\sigma - \frac{(q-p)n}{p}} |z|^p.
$$
Assumption (F1) is equivalent to $\sigma - \frac{(q-p)n}{p} \geq 0$. Fix any $\epsilon^* \in (0,1)$, then, for every $\epsilon \in (0, \epsilon^*)$, we have 
$$
\epsilon^{\sigma - \frac{(q-p)n}{p}} \leq 
(\epsilon^*)^{\sigma - \frac{(q-p)n}{p}}
\leq 1.
$$
We have shown that
$$
\epsilon^\sigma |z|^q \leq L^{\frac{q-p}{p}} |z|^p.
$$
We insert this estimate into \eqref{controlling_f(x,z)} and we get
\begin{equation}
    \label{controlling_f(x,z)_bis}
 f(x,z) 
 \leq
K_1 (f^-_{x,\epsilon})^{**}(z) + K_2 L^{\frac{q-p}{p}} |z|^p + K_3. 
\end{equation}
This means that \eqref{H-alpha-L} holds true with $\alpha = p$, $\theta(\epsilon) = 1$, $A=K_1 + K_2 L^{\frac{q-p}{p}}$, $b(x) = K_3$ and any $\epsilon^* \in (0,1)$, provided $\overline{B(x,\epsilon)} \subset \Omega$.
\qed

\section{Proof of Theorem \ref{main result}}\label{sec proof}

\noindent
Since 
$\overline{B}=\overline{B(x^*,\rho)} \subset \Omega$, then there exists $R>\rho$ such that $\overline{B(x^*,R)} \subset \Omega$. Let us write $B_r$ in place of $B(x^*,r)$. 
Let $x \in B_{\rho}$ and 
$\tilde{x} \in \overline{B(x,\varepsilon)}$ with $0<\varepsilon< R-\rho$. We  consider the standard mollification kernel $\phi$. With $\phi_{\varepsilon}(t)=\phi(\frac t \varepsilon) \frac{1}{\varepsilon^n}$,  we recall

\[
u_\varepsilon(x)= \int_{B(x,\varepsilon)}u(y)\phi_{\varepsilon}(x-y) dy,
\]

\[
Du_\varepsilon(x)= \int_{B(x,\varepsilon)}Du(y)\phi_{\varepsilon}(x-y) dy,
\]
and we  estimate, using H\"older inequality, $\overline{B(x,\varepsilon)}\subset B_{\rho + \varepsilon} \subset B_{R}$ and the change of variable $t=\frac{1}{\varepsilon}(x-y)$, as follows
 
\begin{align}\label{gradmol}
|Du_\varepsilon(x)|= \biggm|\int_{B(x,\varepsilon)}Du(y)\phi_{\varepsilon}(x-y) dy\biggm|\le& \biggm(\int_{B(x,\varepsilon)} |Du(y)|^p dy\biggm)^{\frac 1 p} \biggm(\int_{B(x,\varepsilon)} [\phi_{\varepsilon}(x-y)]^{p'} dy\biggm)^{\frac 1 {p'}}\notag \\
\le& \biggm(\int_{B_{\rho + \varepsilon}} |Du(y)|^p dy\biggm)^{\frac 1 p} \biggm(\int_{B(0,1)} [\phi(t)]^{p'} dt\biggm)^{\frac 1 {p'}} \cdot \, \varepsilon^{- \frac{n}{p}}\notag \\
\le& \biggm(\int_{B_{R}} |Du(y)|^p dy\biggm)^{\frac 1 p} \biggm(\int_{B(0,1)} [\phi(t)]^{p'} dt\biggm)^{\frac 1 {p'}} \cdot \,\varepsilon^{- \frac{n}{p}}.\notag \\
\end{align}

\noindent By assumption (F2),  it holds

\[
0\le f(x,Du_\varepsilon(x))\le K_1 f(\tilde{x},Du_\varepsilon(x))+K_2|x-\tilde{x}|^{\sigma}|Du_\varepsilon(x)|^q +K_3,
\]

\noindent and, by virtue of \eqref{gradmol}, $|Du_\varepsilon(x)|\le c_1 \varepsilon^{- \frac{n}{p}}$, so that $K_2|x-\tilde{x}|^{\sigma}|Du_\varepsilon(x)|^q \le K_2 \varepsilon^{\sigma} \cdot ( c_1 \varepsilon^{- \frac{n}{p}})^{q-p}|Du_\varepsilon(x)|^p$.
Note that (F1) is equivalent to $\sigma - \frac{n}{p}(q-p)\ge 0 $, so that, if
$\varepsilon \le 1$, we have 
$\varepsilon^{\sigma - \frac{n}{p}(q-p)} \leq 1$
and
\[
0\le f(x,Du_\varepsilon(x))\le K_1 f(\tilde{x},Du_\varepsilon(x))+K_2\cdot c_1^{q-p}|Du_\varepsilon(x)|^p +K_3.
\]

\noindent Using the left hand side of (F2),  setting $c_2 = K_2\cdot c_1^{q-p}$ and $c_3=K_1 + c_2$, from the previous inequality we can deduce that 
%there exists a constant $c_3 \ge1$ such that 
$ \forall x \in B_{\rho}$, $\forall \tilde{x} \in \overline{B(x,\varepsilon)}$ and $ \forall \varepsilon \in (0, 1\wedge (R-\rho))$

\begin{equation}\label{inequality 1}
 0\le f(x,Du_\varepsilon(x))\le K_1 f(\tilde{x},Du_\varepsilon(x))+c_2 f(\tilde{x},Du_\varepsilon(x))+K_3= c_3 f(\tilde{x},Du_\varepsilon(x))+K_3.
\end{equation}

\noindent Let us choose  $\tilde{x}=y^*\in \overline{B(x,\varepsilon)}$ given by assumption (F4), and, for every fixed  $x, \varepsilon$, let us  consider the function
\[
H(z)=f(y^*,z).
\]
Then, we can rewrite \eqref{inequality 1} as follows

\begin{equation}\label{inequality 2}
 0\le f(x,Du_\varepsilon(x))\le  c_3 f(\tilde{x},Du_\varepsilon(x))+K_3=c_3 f(y^*,Du_\varepsilon(x))+K_3=c_3 H(Du_\varepsilon(x))+K_3.
\end{equation}
Assumption (F3) says that $H$ is convex, so, by Jensen inequality,

\begin{align}\label{H}
H(Du_\varepsilon(x))= & H \biggm( \int_{B(x,\varepsilon)}Du(y)\phi_{\varepsilon}(x-y) dy \biggm)\le \int_{B(x,\varepsilon)}H(Du(y))\phi_{\varepsilon}(x-y) dy \notag \\
=& \int_{B(x,\varepsilon)}f(y^*, Du(y))\phi_{\varepsilon}(x-y) dy \notag \\
\le & \int_{B(x,\varepsilon)}f(y, Du(y))\phi_{\varepsilon}(x-y) dy \notag \\
=& \int_{B(x,\varepsilon)}h(y)\phi_{\varepsilon}(x-y) dy=h_{\varepsilon}(x)
\end{align}
where we have also used the minimality of $y^*$ from (F4), and 
we have set $h(y)=f(y,Du(y))$. Note that $h \in L^1(B_{R})$.
From  \eqref{inequality 2} and \eqref{H} we have,  for every $x \in B_{\rho}$ and for every $  \varepsilon \in (0, 1\wedge  (R-\rho))$,
\[
0\le f(x,Du_\varepsilon(x)) \le c_3 h_{\varepsilon}(x)+K_3.
\]
Moreover,
\[
Du_\varepsilon\rightarrow Du \,\, {\rm strongly\, in} \,\, L^p(B_{\rho}),
\]

\[
h_\varepsilon \rightarrow h \,\, {\rm strongly\, in} \,\,L^1(B_{\rho}),
\]

\[
\int_{B_{\rho}} h_\varepsilon dx \rightarrow \int_{B_{\rho}} h dx.
\]
\noindent
Note that mollification gives pointwise convergence at Lebesgue points, so
%and, up to subsequences,
\[
Du_\varepsilon(x)\rightarrow Du(x) \,\quad {\rm and} \quad\, h_\varepsilon(x)\rightarrow h(x)\quad {\rm  a.e.\quad in}\quad  B_{\rho}.
\]
\noindent
Convexity assumption (F3) implies the continuity of $z \rightarrow f(x,z)$ and, thanks to pointwise convergence a.e., we get
\[
f(x,Du_\varepsilon(x)) \rightarrow f(x,Du(x)),\quad {\rm  a.e.\quad in}\quad  B_{\rho}.
\]
The Dominated Convergence Theorem implies
\[
\int_{B_{\rho}} f(x,Du_\varepsilon (x))dx \rightarrow  \int_{B_{\rho}} f(x,Du(x))dx
\]
as claimed. This ends the proof of Theorem \ref{main result}.
\qed

\section{Examples}\label{examples}
\noindent
 Let us consider $\tilde{a}: \mathbb{R} \to [0, +\infty)$ as follows

\begin{equation*}
\tilde{a}(t):=\begin{cases}
0 &\quad{\rm if}\quad   t\leq r_1   \\
(t-r_1)^{\sigma} &\quad{\rm if}\quad r_1 < t \leq r_2 \\
(t-r_1)^{\sigma}+h &\quad {\rm if}\quad t>r_2 
\end{cases}
\end{equation*}
with  $r_1 < r_2$, $\sigma >0,\, h>0
%,\,0\le b <h
$.

\noindent Let us verify that $t\to \tilde{a}(t)$  belongs to the class $\mathcal Z^\sigma$, i.e.
%\noindent
$$0 \le \tilde{a}(t) \le c_6 \tilde{a}(s)+c_5|t-s|^{\sigma},$$
for every $t,s \in \mathbb{R}$, for some constants $c_5 \geq 0, c_6 \geq 1$.

\noindent
{\bf Case 1}: $t \leq r_1$. Then,

$$
\tilde{a}(t)=0 \leq \tilde{a}(s),
$$
for every $s$.

\noindent
{\bf Case 2}: $ r_1 < t \leq r_2$.

\noindent
{\bf Subcase 2.1}: $s \leq r_1$. Then $\tilde{a}(s)=0$, so

$$
\tilde{a}(t) = (t-r_1)^\sigma =  \tilde{a}(s) + (t-r_1)^{\sigma} \leq \tilde{a}(s) + |t-s|^{\sigma}.
$$

\noindent
{\bf Subcase 2.2}: $s > r_1$. We observe that $\tilde{a}(s) \geq (s-r_1)^\sigma$. Then
$$
\tilde{a}(t) = (t-r_1)^{\sigma} \leq 2^\sigma [|t-s|^\sigma + (s-r_1)^{\sigma}] \leq 2^\sigma [\tilde{a}(s)+|t-s|^\sigma].
$$
\noindent
{\bf Case 3}: $ t > r_2$.  Then
\begin{align} \label{3.1}
    \tilde{a}(t) & = (t-r_1)^\sigma + h \notag \\ & = (t-r_1)^\sigma + \frac{h}{(r_2 - r_1)^\sigma}(r_2 - r_1)^\sigma \notag \\ & \leq (t-r_1)^\sigma + \frac{h}{(r_2 -r_1)^\sigma}(t-r_1)^\sigma \notag \\ & = \left [ 1+\frac{h}{(r_2 - r_1)^\sigma}\right ](t-r_1)^\sigma.
\end{align}

\noindent
{\bf Subcase 3.1}: $s \leq r_1$. Then $\tilde{a}(s)=0$, and so by \eqref{3.1} we have

$$
\tilde{a}(t) \leq \left [ 1+\frac{h}{(r_2 - r_1)^\sigma} \right ]|t-s|^\sigma = \tilde{a}(s) + \left [ 1+\frac{h}{(r_2 - r_1)^\sigma} \right ]|t-s|^\sigma. 
$$

\noindent
{\bf Subcase 3.2}: $s > r_1$. As in subcase 2.2, we observe that
$$
\tilde{a}(t) \leq  2^\sigma \left [ 1+\frac{h}{(r_2 - r_1)^\sigma} \right ] (|t-s|^\sigma + (s-r_1)^\sigma) \leq  2^\sigma \left [ 1+\frac{h}{(r_2 - r_1)^\sigma} \right ] (\tilde{a}(s)+|t-s|^\sigma ).
$$ 

\noindent
In all cases, we have that 

$$
0\le \tilde{a}(t)\le  2^\sigma \left [ 1+\frac{h}{(r_2 - r_1)^\sigma} \right ] (\tilde{a}(s)+|t-s|^\sigma ),
$$
for every $t,\, s$. Then \eqref{weight a new} is satisfied with $c_5=c_6= 2^\sigma \left [ 1+\frac{h}{(r_2 - r_1)^\sigma} \right ]$.

\noindent Remark that $\tilde{a}$ is sequentially lower semicontinuous in $\mathbb{R}$. Indeed, if $t_k\to t$, 

\noindent if $t \leq r_1$, 
 then 
 $$\tilde{a}(t)= 0 \leq \liminf_k \tilde{a}(t_k);$$

\noindent if $r_1 < t \leq r_2$,
 then, there exists $k_0$ such that, for every $k\ge k_0, \, r_1 <  t_k$ and
 $$\tilde{a}(t)=(t-r_1)^{\sigma}=\lim_k (t_k-r_1)^\sigma \leq \liminf_k \tilde{a}(t_k);$$

\noindent if $r_2 < t$, 
 then, there exists $k_0$ such that for every $k\ge k_0, \,  r_2 < t_k$ and 
 $$\tilde{a}(t)=(t-r_1)^\sigma+h=\lim_k (t_k-r_1)^\sigma+h=\lim_k \tilde{a}(t_k).$$

\noindent
Therefore, $\tilde{a}$ is  sequentially lower semicontinuous, but it is not continuous at $r_2$.

 \noindent
 Let us set

 $$
 a(x) = \tilde{a}(x_1),
 $$
 where $x=(x_1,...,x_n)$; 
 then $a$ is sequentially lower semicontinuous in $\mathbb{R}^n$ but it is not continuous at points $x$ with $x_1 = r_2$. Moreover, 

 $$
0\le a(x) = \tilde{a}(x_1)\le  2^\sigma \left [ 1+\frac{h}{(r_2 - r_1)^\sigma} \right ] (\tilde{a}(y_1)+|x_1 - y_1|^\sigma )
\leq 
2^\sigma \left [ 1+\frac{h}{(r_2 - r_1)^\sigma} \right ] ({a}(y)+|x-y|^\sigma ),
$$
for every $x,\, y \in \mathbb{R}^n$. Then \eqref{weight a new} is satisfied with $c_5=c_6= 2^\sigma \left [ 1+\frac{h}{(r_2 - r_1)^\sigma} \right ]$.

 \noindent  { \bf Remark 1.}
 Let us consider 
 $$
 f(y,\, z)=f_1(z)+a(y)f_2( z), \qquad\quad f_2\ge 0\,,
 $$  with $a$  sequentially lower semicontinuous. 
 Then, 
   $\forall x$ and $\forall\varepsilon$,  there exists $y^*=y^*(x,\,\varepsilon)\in \overline{B(x,\,\varepsilon)} $ such that 
 $$a(y^*)\le a(y)
  \qquad \forall y\in \overline{B(x,\,\varepsilon)}
  $$
  so that
  $$
f(y^*,\, z)\le f(y,\,z)
\qquad \forall y\in \overline{B(x,\,\varepsilon)}
\quad { \rm and } 
\quad \forall z
$$
 
\vspace{0.3 cm}

\noindent
{\bf Example 1}

\noindent
Let us consider
\[
f(x,z)=|z|^p+a(x)(\max\{z_n^1; 0\})^q
\]
with $1 < p< q$, and $0\le a(x) \le A $, $a \in \mathcal{Z}^\sigma$. Then, using \eqref{weight a new}, we get
\begin{align*}
   f(x,z)= &|z|^p+a(x)(\max\{z_n^1; 0\})^q  \\ \notag
  \le &|z|^p+ [c_6 a(\tilde{x})+ c_5 |x-\tilde{x}|^\sigma] (\max\{z_n^1; 0\})^q  \\ \notag
  \le & c_6|z|^p+ c_6 a(\tilde{x}) (\max\{z_n^1; 0\})^q + c_5 |x-\tilde{x}|^\sigma |z|^q  \\ \notag
  = & c_6 f(\tilde{x}, z)+  c_5 |x-\tilde{x}|^\sigma |z|^q,  \\ \notag
\end{align*} 
so that \eqref{struttura nostra} holds true with $K_1 = c_6$, $K_2 = c_5$, $K_3 = 0$.

\noindent The density does not have  Uhlenbeck structure.
Indeed, if we consider the matrix $z$ whose entries are

\[
z_n^1=1,   \qquad z_j^{\alpha}=0 \qquad {\rm otherwise,}
\]
then
\[
|z|=1, \qquad f(x,z)= 1+a(x)
\]
On the other hand, considering the matrix $\tilde{z}=-z$, we have

\[
\tilde{z}_n^1=-1,  \qquad \tilde{z}_j^{\alpha}=0   \qquad {\rm otherwise,}
\]
then
\[
|-z|=|z|=1, \qquad f(x,-z)= 1+a(x)(\max\{-1; 0\})^q=1.
\]
Supposing $f(x,\xi)=g(x, |\xi|)$ \, $\forall x, \xi$, we would have

\begin{align*}
\xi=& z \qquad \Rightarrow \qquad 1+a(x)=g(x,1)\notag\\
\xi= &\tilde{z}=-z \qquad \Rightarrow \qquad 1=g(x,1)
\end{align*}
and then

$$a(x)=0 \quad \forall x,$$
but we are interested in  double phase functionals, so $a(x)>0$ for some $x$.\\
\noindent The previous calculations show that $f(x,z)$ is not symmetric with respect to $z$, so we cannot use the result in \cite{BC}.

\noindent We cannot apply the result in \cite{BCDFM} to this functional since \eqref{hp_structure_BCDFM_new} does not hold true. Indeed, by contradiction, let us assume \eqref{hp_structure_BCDFM_new}. Then, if we consider the matrix $z$ such that

\[
z_1^1=t, \quad t>0; \qquad z_j^{\alpha}=0 \qquad {\rm otherwise}
\]
then $f(x,z)=|z|^p=t^p$ and, using the left hand side of \eqref{hp_structure_BCDFM_new}, we get
\[
\nu_1\bigg( t^{\tilde{p}}+ \tilde{a}(x)t^{\tilde{q}}\bigg) \le  t^p
\]
therefore as $t \rightarrow \infty$
\begin{align}\label{a(x)bis}
{\rm if }  \quad  \tilde{a}(x)=0&  \qquad {\rm then}  \qquad \tilde{p} \le p,\notag\\
{\rm if} \quad \tilde{a}(x)>0 & \qquad  {\rm then}  \qquad \tilde{q} \le p.
\end{align}
If we consider the matrix $\tilde{z}$ given by
\[
\tilde{z}_n^1=t, \quad t>0; \qquad \tilde{z}_j^{\alpha}=0 \qquad {\rm otherwise}
\]
then, $|\tilde{z}|=t$ and $f(x, \tilde{z})= t^p+ a(x)t^q$, provided 
$x$ has been selected in such a way that $a(x)>0$. Using the right hand side of \eqref{hp_structure_BCDFM_new}, we get
\[
t^p+ a(x)t^q \le \nu_2\bigg( t^{\tilde{p}}+ \tilde{a}(x)t^{\tilde{q}}\bigg).
\]
Therefore, as $t \rightarrow \infty$, if $\tilde{a}(x)=0$  then $ q \le \tilde{p}$ that is impossible since by \eqref{a(x)bis} we get $\tilde{p} \le p< q$. On the other hand, 
if $\tilde{a}(x)>0$ then $q \le \tilde{q}$ that is impossible since \eqref{a(x)bis} gives $\tilde{q} \le p< q$.

\vspace{0.2 cm}
\noindent We cannot also apply the result in \cite{BCM} to the  functional above. Indeed, in \cite{BCM} the authors assume that there exist constants $0<\nu, \beta <1 <L$ such that
\begin{equation}\label{hp structure BCM new}
 \nu M(x,\beta z)\le f(x,z) \le L(M(x,z)+ g(x))
\end{equation}
for suitable $g(x), M(x,z)$ where $M(x,-z)=M(x,z)$. We will show that \eqref{hp structure BCM new}
is not satisfied.
To this aim, we argue by contradiction: we assume  \eqref{hp structure BCM new}  and we consider the matrix $z$ such that
\[
z_n^1=t, \quad t<0; \qquad z_j^{\alpha}=0 \qquad {\rm otherwise;}
\]
then
\[
|z|=|t|, \qquad {\rm and} \qquad \nu M(x,\beta z)\le f(x,z)=|t|^p.
\]
Moreover, we consider the matrix $\tilde{z}$ such that 
\[
\tilde{z}_n^1=- \beta t, \quad - \beta t>0; \qquad \tilde{z}_j^{\alpha}=0 \qquad {\rm otherwise;}
\]
then $\tilde{z}=- \beta z$ and

\[
|\tilde{z}|=\beta |t|,  \qquad f(x, \tilde{z})= \beta^p |t|^p+ a(x)\beta^q |t|^q \le L\bigg( M(x,- \beta z)+g(x)\bigg)= L\bigg( M(x,\beta z)+g(x)\bigg),
\]
where the last equality is due to the symmetry of $M$ with respect to the second variable. Therefore, we get

\[
\beta^p |t|^p+ a(x)\beta^q |t|^q \le L\bigg( \frac{|t|^p}{\nu}+g(x)\bigg)
\]
that is false if $t \rightarrow - \infty$ and $a(x)>0$.

\noindent
We cannot apply the result in \cite{Koch-Ruf-Schaffner} since $q$ can be any number larger than $p$, provided $\sigma$ is chosen to be $n(q-p)/p$: this shows that assumption (2.2) in \cite{Koch-Ruf-Schaffner} fails.

\noindent
Note that assumption (3) of Remark 4.3 in \cite{HH} fails. Indeed, let us recall such an assumption. Set 
$$
A^\alpha_i (x,z) = \frac{\partial f}{\partial z^\alpha_i}(x,z)
= p |z|^{p-2} z^\alpha_i + a(x) q ( \max \{ z^1_n ; 0 \})^{q-1} \delta^{\alpha 1} \delta_{i n},
$$ 
where we used Kronecker delta: $\delta_{i n}=1$ when $i = n$ and $\delta_{i n}=0$ otherwise; a similar definition holds for $\delta^{\alpha 1}$.
Aforementioned assumption (3) requires the existence of a constant $L \geq 1$ such that
\begin{equation}
\label{assumption_HH}
|z'| |A(x,z')| \leq L A(x,z) z
\end{equation}
for every $x,z,z'$, with $|z'|=|z|$. 
We are going to show that \eqref{assumption_HH} fails. Indeed, let us take $z=-z'$ with 
$$
(z')^1_n = t > 0
\quad
{\rm and }
\quad
(z')^\alpha_i = 0
\quad
{\rm otherwise. }
$$
Then, $|z'|=|z|=t$, 
$A^1_n (x,z') = p t^{p-1} + a(x) q t^{q-1}$, 
$A^1_n (x,z) = - p t^{p-1}$ and $A^\alpha_i (x,z') = 0 = A^\alpha_i (x,z)$ otherwise.
In this case, \eqref{assumption_HH} becomes
\begin{equation}
\label{assumption_HH_bis}
t (p t^{p-1} + a(x) q t^{q-1}) \leq L p t^p.
\end{equation}
This inequality is false when $t$ goes to $+\infty$ since $p<q$, provided $x$ has been chosen in such a way that $a(x)>0$.

\vskip0,5cm
\noindent
{\bf Example 2}
\vskip0,5cm

\noindent We know that if $a(x)$ is sequentially lower 
 semicontinuous in $\Omega$
and
$$
f(x,z)=f_1(z)+ a(x)f_2(z)
$$ 
with $f_2(z)\ge 0$, then, property (F4) holds true:

\vskip0,4cm 
\noindent $\forall x\in \Omega$, $\forall \varepsilon >0$, with $\overline{B(x,\varepsilon)}\subset \Omega$, there exists $y^*\in \overline{B(x,\varepsilon)} $ such that 

$$f(y^*,z))\le f(y,z), \qquad \forall y \in  \overline{B(x,\varepsilon)}\quad {\rm and} \quad \forall z .$$ 
We ask ourselves whether there are $f(x,z)$ without product structure $ a(x)f_2(z)$,  satisfying property {(F4)}. The answer is positive. Indeed, for $q>1$, let us consider
$$g(x_1,t)=\max\{(\max \{t,0\})^q - (\max \{x_1,0\})^q;\, 0\}$$
and, for $1<p<q$, 

$$f(x,z)=|z|^p+ g(x_1,z_n^1)=|z|^p+\max\{(\max \{z_n^1,0\})^q - (\max \{x_1,0\})^q;\, 0\}.$$

\noindent Now we show that $g$ is not of product type: $g(x_1,t)=a(x_1)h(t)$. Indeed, if we assume $g(x_1,t)=a(x_1)h(t)$ then
$$a(x_1)h(t)=g(x_1,t)=t^q-(x_1)^q>0,$$
for all $x_1>0$ and $t>x_1>0$. Therefore,  $a(x_1)\ne 0$ and $ h(t)\ne 0$. Hence,  $a(x_1)\ne 0$
for all $x_1>0$. Moreover, for all $t>0$ we consider $x_1=\frac{t}{2}$ and we get $0<x_1<t$, so  $h(t)\ne 0$. This means that 

$$ {\rm if} \quad t>0 \qquad \text{then} \qquad h(t) \ne 0. $$
\noindent
On the other hand, if $0<t<x_1$ then $a(x_1)h(t)=g(x_1,t)=0$. Since $a(x_1)\ne 0$  for all $x_1>0$ it follows $h(t)=0$, that is,
$$ {\rm if} \quad 0<t<x_1 \qquad \text{then}\qquad h(t)= 0.$$
Moreover, for all $t>0$ we consider $x_1=2t$ and we get $0<t<x_1$ so $h(t)=0$. This means that
$${\rm if} \quad t>0 \qquad \text{then} \qquad h(t) = 0. $$ 
We get a contradiction and therefore $g(x_1,t) \ne a(x_1)h(t)$.

\noindent
Now, we easily check that
 
$g(x_1,t)=0$  when $t \leq 0$ or $0 \leq t \leq x_1$,

$g(x_1,t)=t^q$ when  $x_1 \leq 0 \leq t$,  

$g(x_1,t)=t^q-x_1^q$  when $0 \leq x_1 \leq t$. 

\vskip0.3cm

\noindent 
 Therefore, for $t\le 0 $, $g(x_1,t)=0$ for every $x_1$. On the other hand, for $t>0$ we have 

\begin{equation*}
g(x_1,t)=\begin{cases}
t^q &\quad {\rm if}\quad x_1\le 0\\
t^q-x_1^q &\quad {\rm if}\quad 0\le x_1\le t \\
0 &\quad {\rm if}\quad t\le x_1\,.
\end{cases}
\end{equation*}

%\vspace{0.3cm}

\begin{figure}[h]
\centering
\scalebox{0.7}{
\begin{tikzpicture}
\begin{axis}[
        samples = 100,
        domain = -3:3,
        xmax=4, xmin=-3,
        ymax=1.85, ymin=-0.5,
        axis x line = center,
        axis y line = center,
        axis on top=true,
        %axis line style =ultra thick,
        xlabel = {$ {\scriptstyle x_1} $},
        ticks = none
        ]
        \addplot[name path=A, line width=2pt,solid] {max(1-(max(x,0))^2,0)
        %abs(x)^(2)
        } [yshift=3pt] node[pos=.95,left] 
        { 
        %$ |x_1|^{\alpha}$
        };
\end{axis}
\end{tikzpicture}}
\caption{Plot of $x_1 \to g(x_1,t)$ when $t=1$ and $q=2$}
 \label{figure2}
\end{figure}
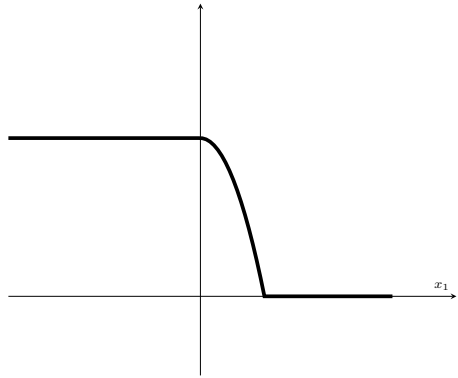

%In figure \ref{figure2} we have the graph of $x_1 \rightarrow g(x_1,t)$ when $t=1$ and $q=2$.\\
\noindent
By analyzing the function $v(x_1)=t^q-x_1^q$ for $x_1> 0$, we realize that it is decreasing and concave. Then $\forall y\in \overline{B(x,\varepsilon)}$, we get 
$$g(x_1+\varepsilon,t)\le g(y_1,t).$$

\noindent
Then, by choosing $y^*=(x_1+\varepsilon,x_2,\ldots,x_n)\in \overline{B(x,\varepsilon)}$ we can conclude that

$$f(y^*,z)=|z|^p+g(x_1+\varepsilon,z_n^1)\le |z|^p+ g(y_1,z_n^1)=f(y,z),$$
for every $y\in \overline{B(x,\varepsilon)}$ and (F4) holds true. 

\noindent
Let us recall that, given two convex functions $\psi_1,\,\psi_2:\R^k\to \R$, also the function  $\max \{\psi_1,\,\psi_2\}$ is convex.  Moreover, it is well known that if $v:\R\to [0, +\infty) $ is convex and  $w:[0, +\infty)\to [0, +\infty)$  is increasing and convex, then, composition $w\circ v:\R\to [0, +\infty) $ is convex.
Therefore we have that, for $q>1$,
the functions $$t\to (\max\{t,0\})^q,$$
$$t\to (\max\{t,0\})^q-(\max\{x_1,0\})^q$$ and 
$$t\to \max\{(\max\{t,0\})^q-(\max\{x_1,0\})^q; 0\}$$ are convex on $\R$, and we deduce that $f(x,z)$ is convex with respect to $z$: then (F3) holds true.

\noindent
\noindent
Furthermore,  since $g \ge0$,
$$|z|^p\le f(x,z);$$
then, the left hand side of (F2) holds true.

\noindent Now we prove that  

$$g(x_1,t)\le g(y_1,t)+R^q \qquad {\rm for \, every} \quad   x_1,y_1,t, \quad {\rm with }\quad |x_1|\le R,\; |y_1|\le R.$$

\noindent
Indeed, when $t\le 0$, we have

$$g(x_1, t)=0= g(y_1, t)\le g(y_1, t)+R^q; $$
if $t>0$ and $x_1\le 0$, we get
\begin{align*}
  g(x_1, t) &=t^q \\ &= t^q- 
(\max\{y_1;0\})^q+(\max\{y_1;0\})^q \\ & \le   \max\{t^q-(\max\{y_1,0\})^q;0\}+R^q \\ & = 
g(y_1, t)+R^q;
\end{align*}
% $$g(x_1, t)=t^q = t^q- 
% (\max\{y_1;0\})^q+(\max\{y_1;0\})^q\le 
% $$
% $$
% \max\{t^q-(\max\{y_1,0\})^q;0\}+R^q= 
% g(y_1, t)+R^q; 
% $$
if $0< x_1<t$, then, as before,

$$g(x_1, t)=t^q-x_1^q\le t^q\le g(y_1, t)+R^q\,; $$
if $0< t \leq x_1$, then, 

$$g(x_1, t)= 0 \le g(y_1, t)+R^q\,.$$
Therefore 
$$f(x,z)\le f(y,z)+R^q$$
is true $\forall z$ and $\forall x,y\in \overline{B(0,R)} \,.$
Then, also the right hand side of (F2) holds true with $K_1 = 1$, $K_2 = 0$, $K_3 = R^q$ and $\sigma$ can be any positive number.
If we restrict to $R=1$, then $K_3 =1$, so (F1) is satisfied by any pair $p,q$ with $1<p<q$, since we can take $\sigma = n(q-p)/p$ in (F1) and (F2).

\vskip0,5cm

\noindent
We cannot apply the result in \cite{Esposito-Leonetti-Petricca} since \eqref{Es-Leo-Petr sigma} fails as $q$ can be any number larger than $p$. For the same reason, assumption (2.2) in \cite{Koch-Ruf-Schaffner} fails.

\noindent We cannot apply the result in \cite{BC} to this functional, indeed the simmetry assumption with respect to the second variable is not satisfied as we prove choosing the following two matrices $z$ and $\tilde{z}$. First we consider the matrix $z$ such that

\[
z_n^1=1,  \qquad z_j^{\alpha}=0 \qquad {\rm otherwise}
\]
 and we take $x_1\le0$, then 
\[
|z|=1, \qquad g(x_1,z_n^1)= 1.
\]
Then we consider the matrix $\tilde{z}=-z$ such that

\[
(-z)_n^1=-1,  \qquad (-z)_j^{\alpha}=0 \qquad {\rm otherwise}
\]
and we take again $x_1\le0$, then 

\[
|-z|=|z|=1, \qquad g(x_1,(-z)_n^1)= 0.
\]
Therefore, if  $x_1\le0$ we have

$$f(x,z)=|z|^p+ g(x_1,z_n^1)=2 \qquad \text{and} \qquad f(x,-z)=|-z|^p+ g(x_1,(-z)_n^1)=1 . $$

\vspace{.2cm}
\noindent We cannot apply the result in \cite{BCM} to this functional. In other words, we prove that assumption \eqref{hp structure BCM} is not satisfied. For the convenience of the reader, we recall such an assumption:

\noindent  there exist positive constants $\nu,L >0$, and $\beta \in (0,1)$  such that
\begin{equation*}
 \nu M(x,\beta z)\le f(x,z) \le L(M(x,z)+1)
\end{equation*}
with $M(x,z)=M(x,-z)$. Indeed, if we consider the following matrix $z$ for which
\[
z_n^1=t \quad {\rm with } \quad t<0 \quad {\rm and } \qquad z_j^{\alpha}=0 \qquad {\rm otherwise,}
\]
then, $|z|=|t|$ and $g(x_1, z_n^1)=0$, so that
\[
%|z|=|t|, \qquad {\rm and} \qquad 
\nu M(x,\beta z)\le f(x,z)=|z|^p+g(x_1, z_n^1)=|t|^p.
\]
%since $ z_n^1=t<0$ implies $g(x_1, z_n^1)=0$.
If we consider the matrix $\tilde{z}=-\beta z$ we have
\[
\tilde{z}_n^1=- \beta t>0 \quad {\rm and } \qquad \tilde{z}_j^{\alpha}=0 \qquad {\rm otherwise,}
\]
then,
\[
|\tilde{z}|=\beta |t| \qquad {\rm and} \qquad f(x, \tilde{z})= f(x, - \beta z)=\beta^p |t|^p+g(x_1, - \beta t) = 
\beta^p |t|^p+(- \beta t)^q -(x_1)^q,
\]
provided $0<x_1<-\beta t$. If we take $x_1= \min \{\frac{1}{2}; -\beta \frac t 2 \}$ then $0<x_1 \le -\beta \frac t 2 < - \beta t$ and we are done. Moreover,
$$ f(x, - \beta z)\le L M(x,-\beta z)+L=L M(x,\beta z)+L \le L\frac{|t|^p}{\nu}+L$$
where we used also the symmetry of $M$ with respect to the second variable. Therefore we get

\[
\beta^p |t|^p+ \beta^q |t|^q -\biggm( \min \{\frac{1}{2}; -\beta \frac t 2 \}\biggm)^q = f(x,- \beta z)\le  L\bigg( \frac{|t|^p}{\nu}+1\bigg).
\]
We observe that $\min \{\frac{1}{2}; -\beta \frac t 2 \}=\frac{1}{2}$ as $t \to - \infty$, so from the last inequality we get
$$\beta^p + \beta^q |t|^{q-p} - \frac{1}{2^q |t|^p} \le  \frac{L}{\nu} +\frac{L}{|t|^p}$$
that is false if $|t| \rightarrow +\infty$ since $q>p$.

\noindent Finally, we prove that we cannot use the result in \cite{BCDFM}.
Indeed the structure assumption \eqref{hp_structure_BCDFM_new} cannot apply. Indeed, arguing by contradiction, let us first consider the matrix $z$ such that
\[
z_1^1=t>0 \quad {\rm and } \qquad z_j^{\alpha}=0 \qquad {\rm otherwise,}
\]
then,
\[
f(x,z)=|z|^p=t^p \qquad {\rm and} \qquad \nu_1\bigg( t^{\tilde{p}}+ \tilde{a}(x)t^{\tilde{q}}\bigg) \le  t^p
\]
therefore, as $t \rightarrow +\infty$,
\begin{align}\label{a(x)bisbis}
{\rm if }\quad \tilde{a}(x)=0&  \qquad {\rm then}  \qquad \tilde{p} \le p\notag\\
{\rm if} \quad \tilde{a}(x)>0 & \qquad  {\rm then}  \qquad \tilde{q} \le p.
\end{align}
If we fix $x_1$ and consider the matrix $\tilde{z}$ such that
\[
\tilde{z}_n^1=t>|x_1| \quad {\rm and } \qquad \tilde{z}_j^{\alpha}=0 \qquad {\rm otherwise,}
\]
we get $g(x_1,z_n^1)=t^q-(\max \{x_1;0\})^q$; 
then, 
\[
|\tilde{z}|=t \qquad {\rm and} \qquad f(x, \tilde{z})= t^p+ t^q - \biggm(\max \{x_1;0\} \biggm)^q \le \nu_2\bigg( t^{\tilde{p}}+ \tilde{a}(x)t^{\tilde{q}}\bigg).
\]
Therefore, as $t \rightarrow +\infty$, if $\tilde{a}(x)=0$  then $ q \le \tilde{p}$ that is impossible since by \eqref{a(x)bisbis} we get $\tilde{p} \le p< q$; on the other hand, if $\tilde{a}(x)>0$, then $q \le \tilde{q}$ that is impossible since by \eqref{a(x)bisbis} we get $\tilde{q} \le p< q$.

\noindent
Let us also note that $\frac{\partial g}{\partial t}(x_1,x_1)$ does not exist when $x_1>0$, so, assumption 1.5 in \cite{HH} fails.

\section{Energy approximation and absence of Lavrentiev phenomenon}
\label{en_approx_sec}

\noindent
We are going to show that absence of Lavrentiev phenomenon implies energy approximation. More precisely, 
 let us consider the integral functional \eqref{functional}, where
 $\Omega$ is an open ball $B$, 
 $x \to f(x,z) \geq 0$ is measurable and
 \begin{equation}
     \label{strict_convexity}
     z \to f(x,z) \quad \text{ is strictly convex.}
 \end{equation}
 Moreover, there exist constants $\tilde{c} > 0$ and $p>1$ such that
 \begin{equation}
     \label{p_growth_from_below}
    \tilde{c} (|z|^p - 1) \leq f(x,z).
 \end{equation}
We assume that $u \in W^{1,p}(B;\mathbb{R}^N)$, with $\mathcal{F}(u;B) < +\infty$, is a minimizer, that is,
\begin{equation}
     \label{minimality}
    \mathcal{F}(u;B) = \inf_{v \in u + W^{1,p}_0(B;\mathbb{R}^N)} \mathcal{F}(v;B).
 \end{equation}
Let us take $Y$ to be a subset of  $W^{1,p}(B;\mathbb{R}^N)$. Energy approximation means that
\begin{equation}
    \label{energy_approximation_sec4}
    \exists \{u_k\} \subset Y: \quad u_k \to u \text{ weakly in } W^{1,p}(B;\mathbb{R}^N) \quad \text{ and } \quad \mathcal{F}(u_k;B) \to \mathcal{F}(u;B).
\end{equation}
On the other hand, absence of Lavrentiev phenomenon means that
\begin{equation}
    \label{no_Lavrentiev_sec4}
    \inf_{v \in u + W^{1,p}_0(B;\mathbb{R}^N)} \mathcal{F}(v;B) =
    \inf_{v \in (u + W^{1,p}_0(B;\mathbb{R}^N)) \cap Y} \mathcal{F}(v;B).
\end{equation}
For the minimizer $u$, absence of Lavrentiev phenomenon implies energy approximation, that is, we have the following 
\begin{thm}
\label{absence_of_Lavrentiev} 
Under assumptions \eqref{strict_convexity}, \eqref{p_growth_from_below} and \eqref{minimality}, we have
\begin{equation}
    \label{no_Lav_implies_en_approx}
    \eqref{no_Lavrentiev_sec4}
    \qquad
    \Longrightarrow
    \qquad\eqref{energy_approximation_sec4}
\end{equation}
\end{thm}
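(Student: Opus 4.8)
The plan is to use the minimality of $u$ to turn the absence-of-Lavrentiev identity \eqref{no_Lavrentiev_sec4} into the existence of a minimizing sequence inside $Y$, and then to upgrade the convergence of the energies to weak convergence of the functions by combining coercivity, weak lower semicontinuity, and uniqueness of the minimizer.

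First I would observe that, by the minimality assumption \eqref{minimality}, the left-hand side of \eqref{no_Lavrentiev_sec4} equals $\mathcal{F}(u;B)$, which is finite. Hence \eqref{no_Lavrentiev_sec4} forces
\[
\inf_{v \in (u + W^{1,p}_0(B;\mathbb{R}^N)) \cap Y} \mathcal{F}(v;B) = \mathcal{F}(u;B) < +\infty ,
\]
so in particular $(u + W^{1,p}_0(B;\mathbb{R}^N)) \cap Y \neq \emptyset$, and we may select $u_k \in (u + W^{1,p}_0(B;\mathbb{R}^N)) \cap Y$ with $\mathcal{F}(u_k;B) \le \mathcal{F}(u;B) + 1/k$. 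This already yields $\{u_k\} \subset Y$ and $\mathcal{F}(u_k;B) \to \mathcal{F}(u;B)$; it remains to prove $u_k \rightharpoonup u$ weakly in $W^{1,p}(B;\mathbb{R}^N)$.

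For the weak convergence, I would use \eqref{p_growth_from_below}: since $\tilde{c}\int_B (|Du_k|^p - 1)\,dx \le \mathcal{F}(u_k;B) \le \mathcal{F}(u;B)+1$, the sequence $\{Du_k\}$ is bounded in $L^p(B;\mathbb{R}^{N\times n})$, and because $u_k - u \in W^{1,p}_0(B;\mathbb{R}^N)$, Poincar\'e's inequality makes $\{u_k\}$ bounded in $W^{1,p}(B;\mathbb{R}^N)$. Any subsequence then has a further subsequence converging weakly in $W^{1,p}(B;\mathbb{R}^N)$ to some $v$, and since $W^{1,p}_0(B;\mathbb{R}^N)$ is a closed subspace, hence weakly closed, $v - u \in W^{1,p}_0(B;\mathbb{R}^N)$. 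Now I identify $v$: because $x \mapsto f(x,z)$ is measurable, $z \mapsto f(x,z)$ is convex by \eqref{strict_convexity}, and $f \ge 0$, the functional $w \mapsto \mathcal{F}(w;B)$ is sequentially weakly lower semicontinuous on $W^{1,p}(B;\mathbb{R}^N)$ (classical semicontinuity theorem for convex integrands). Hence $\mathcal{F}(v;B) \le \liminf_k \mathcal{F}(u_k;B) = \mathcal{F}(u;B)$, while $v \in u + W^{1,p}_0(B;\mathbb{R}^N)$ and \eqref{minimality} give $\mathcal{F}(v;B) \ge \mathcal{F}(u;B)$; thus $v$ is itself a minimizer. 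Strict convexity \eqref{strict_convexity} then forces $v = u$: if $v \neq u$, then $Dv \neq Du$ on a set of positive measure, and the midpoint $(u+v)/2 \in u + W^{1,p}_0(B;\mathbb{R}^N)$ would satisfy $\mathcal{F}((u+v)/2;B) < \tfrac12\big(\mathcal{F}(u;B)+\mathcal{F}(v;B)\big) = \mathcal{F}(u;B)$, contradicting \eqref{minimality}. Since every weakly convergent subsequence of $\{u_k\}$ has the same limit $u$, the whole sequence satisfies $u_k \rightharpoonup u$ in $W^{1,p}(B;\mathbb{R}^N)$, which together with $\mathcal{F}(u_k;B) \to \mathcal{F}(u;B)$ is exactly \eqref{energy_approximation_sec4}.

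The main obstacle is the identification of the weak limit in the last step: this is where all three hypotheses are genuinely needed — coercivity to extract a weak limit, lower semicontinuity to compare energies, and strict convexity to pin down the unique minimizer — whereas the passage from \eqref{no_Lavrentiev_sec4} to the minimizing sequence and the boundedness estimates are routine. I would also remark that the argument uses only $\mathcal{F}(u;B) < +\infty$ and never requires any structure on $Y$ beyond $Y \subset W^{1,p}(B;\mathbb{R}^N)$.
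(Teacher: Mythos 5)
Your proof is correct and follows essentially the same route as the paper's: pass from absence of the Lavrentiev gap to a minimizing sequence in $(u + W^{1,p}_0(B;\mathbb{R}^N)) \cap Y$, use coercivity \eqref{p_growth_from_below} to obtain weak compactness, invoke weak lower semicontinuity of convex integrands to show any weak limit is a minimizer, and then use strict convexity \eqref{strict_convexity} to identify that limit as $u$. The only (harmless) refinement is that you conclude the whole sequence converges weakly to $u$ via the subsequence-of-subsequences principle, whereas the paper simply passes to a weakly convergent subsequence and relabels it; you also spell out the midpoint argument for uniqueness, which the paper leaves implicit.
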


\begin{proof}

\noindent
We use minimality \eqref{minimality} and absence of Lavrentiev phenomenon \eqref{no_Lavrentiev_sec4} in order to get 
$$
\mathcal{F}(u;B) = \inf_{v \in (u + W^{1,p}_0(B;\mathbb{R}^N)) \cap Y} \mathcal{F}(v;B).
$$
Then, there exists a sequence  $\{v_k\} \subset (u + W^{1,p}_0(B;\mathbb{R}^N)) \cap Y$ such that 
\begin{equation}
\label{convergence_sec4}
\mathcal{F}(v_k;B) \to \mathcal{F}(u;B).
\end{equation}
Now we need to prove the convergence of $v_k$ to $u$. To this aim, we use  
convergence \eqref{convergence_sec4} and we get $k_1$ such that
$$
 \mathcal{F}(v_k;B) \leq  \mathcal{F}(u;B) + 1,
$$
for every $k \geq k_1$. On the other hand, 
$p$ coercivity \eqref{p_growth_from_below} implies
$$
\tilde{c} \int_B (|Dv_k|^p - 1) dx \leq \mathcal{F}(v_k;B).
$$
These two inequalities merge into
$$
\tilde{c} \int_B (|Dv_k|^p - 1) dx \leq \mathcal{F}(u;B) + 1.
$$
Such an inequality says that $\{v_k - u \}$ is bounded in $W^{1,p}_0(B;\mathbb{R}^N))$; since $p>1$, there exists a limit function $w_0 \in W^{1,p}_0(B;\mathbb{R}^N))$ and a subsequence 
$\{v_{h_k} - u \}$ such that
$$
v_{h_k} - u \to w_0 \text{ weakly in } W^{1,p}_0(B;\mathbb{R}^N),
$$
\begin{equation}
\label{weak_conv}
v_{h_k}  \to u + w_0 \text{ weakly in } W^{1,p}(B;\mathbb{R}^N),
\end{equation}
\begin{equation}
\label{weak_conv_of_gradients}
Dv_{h_k}  \to D(u + w_0) \text{ weakly in } L^{p}(B;\mathbb{R}^N).
\end{equation}
Convexity \eqref{strict_convexity} implies lower semicontinuity with respect to weak convergence of gradients, so, \eqref{weak_conv_of_gradients} and \eqref{convergence_sec4} give
$$
\mathcal{F}(u + w_0;B) \leq \liminf \mathcal{F}(v_{h_k};B) 
= \lim \mathcal{F}(v_{k};B) = \mathcal{F}(u;B).
$$
Since $u$ is a minimizer and $u + w_0 \in u + W^{1,p}_0(B;\mathbb{R}^N)$, then $u + w_0$ is a minimizer too. Strict convexity \eqref{strict_convexity} implies uniqueness of minimizer, so, $w_0 = 0$ and weak convergence \eqref{weak_conv} tells us that
$$
v_{h_k}  \to u  \text{ weakly in } W^{1,p}(B;\mathbb{R}^N).
$$
We take $u_k = v_{h_k}$: this ends the proof.
\end{proof}

\noindent
\textbf{Remark 2.}
The last argument in the previous proof requires that all the $v_{h_k}$ have the same boundary value $u$.
When proving regularity of minimizers, energy approximation is an important tool but we do not need the same boundary value for all the approximating functions $u_k$. This makes things easier, as we can see in the proof of Theorem \ref{main result}.

\vskip0.5cm

\noindent
\textbf{Remark 3.}
Let us note that energy approximation can also be obtained when the density $f(x,z)$ can be approximated from below by $f_k(x,z)$ that increases with respect to $k$. In this case, the approximating functions $u_k$ are obtained by minimization of $v \to \int_B f_k(x,Dv(x)) dx$. Then, regularity of $u_k$ is linked to general regularity of minimizers: this requires additional assumptions, see \cite{DeFilippis-Leonetti-Treu}, \cite{Cupini-Guidorzi-Mascolo}, \cite{Cupini-Giannetti-Giova-Passarelli}, \cite{Carozza-Kristensen-Passarelli2014}.

\vskip0.5cm

\noindent
\textbf{Remark 4.}
Let us also mention that, in \cite{BuGwSk}, for the scalar case $N=1$, the authors first approximate $u$ by means of bounded $u_k$, then they use mollification for $u_k$. In the vectorial case $N \geq 2$, approximation by bounded functions is not easy, unless we are dealing with Uhlenbeck structure $f(x,z) = \tilde{f}(x,|z|)$, see section 7 in \cite{BuGwSk}; see also \cite{LeoPet}.

\vskip1cm

\noindent
\textbf{MSC 2020.} 49N99, 35J99, 35B65 \\ 
\textbf{Keywords.} Energy, approximation, calculus of variations, double phase \\
\textbf{Acknowledgements.} The authors are members of the Gruppo Nazionale per l’Analisi Matematica, la Probabilità e le loro Applicazioni (GNAMPA) of the Istituto Nazionale di Alta Matematica (INdAM). The third and fourth authors have been partially supported through the INdAM$-$GNAMPA 2024 Project “Interazione ottimale tra la regolarità dei coefficienti e l’anisotropia del problema in funzionali integrali a crescite non standard” (CUP: E53C23001670001).

\end{document}